\newcommand{\C}{\mathbb C}
\newcommand{\N}{\mathbb N}
\newcommand{\F}{\mathbb F}
\newcommand{\sma}{\left(\begin{array}}
\newcommand{\fma}{\end{array}\right)}
\newtheorem{lem}{Lemma}
\newtheorem{co}[lem]{Corollary}
\newtheorem{thm}[lem]{Theorem}
\newtheorem{prop}[lem]{Proposition}
\newenvironment{proof}{\textbf{Proof.}}{\newline\hspace*{\fill}{$\Box$}\\}
\begin{document}
\title{Mapping class groups are not linear in positive characteristic}
\author{J.\,O.\,Button}

\newcommand{\Address}{{
  \bigskip
  \footnotesize

\textsc{Selwyn College, University of Cambridge,
Cambridge CB3 9DQ, UK}\par\nopagebreak
  \textit{E-mail address}: \texttt{j.o.button@dpmms.cam.ac.uk}
}}

\date{}
\maketitle
\begin{abstract}
For $\Sigma$ an orientable surface of finite topological type having
genus at least 3 (possibly closed or possibly with any
number of punctures or boundary components), we show that the mapping
class group $Mod(\Sigma)$ has no faithful linear representation 
in any dimension over any field of positive characteristic. 
\end{abstract}

\section{Introduction}

A common question to ask of a given infinite
finitely generated group is whether
it is linear. For instance consider the braid groups $B_n$, the automorphism
group $Aut(F_n)$ of the free group $F_n$ and the mapping class group
$Mod(\Sigma_g)$ of the closed orientable surface $\Sigma_g$ with genus $g$.
Linearity in the first case was open for a while but is now known to hold
by \cite{big}, \cite{kra}. For $n\geq 3$ \cite{fp} showed that $Aut(F_n)$
is not linear, whereas for $g\geq 3$ the third case is open. However
whereas the definition of linearity is that a group embeds in $GL(d,\F)$
for some $d\in\N$ and some field $\F$, in practice one tends to
concentrate on the case where $\F=\C$. In fact a finitely generated
group embeds in $\C$ if and only if it embeds in some field of characteristic
zero, so it is enough to restrict to this case if only characteristic zero
representations are being considered.

However we can still ask about faithful linear representations in positive
characteristic. For instance in the three examples above, it is unknown
for $n\geq 4$ if the braid group $B_n$ admits a faithful linear
representation in any dimension over any field of positive characteristic.
For $Aut(F_n)$ with $n\geq 3$, the proof in \cite{fp} applies to any
field, not just the characteristic zero case, so that there are also no
faithful representations in positive characteristic. As for mapping
class groups, we show here 
that there are no faithful linear representations of $Mod(\Sigma_g)$  
in any dimension over any field of positive characteristic when
$\Sigma_g$ is an orientable surface of finite topological type having
genus $g$ at least 3 (which might be closed or might have any
number of punctures or boundary components). 

The idea comes from considering the analogy between a finitely generated
group being linear in positive characteristic and having a ``nice'' geometric
action, as we did in \cite{me} when showing that Gersten's free by cyclic
group has no faithful linear representation in any positive characteristic.
On looking more closely to see which definition of ``nice'' aligns most
closely with linearity in positive characteristic, we were struck by the
similarities between that and the notion of a finitely generated group
acting properly and semisimply (more so than properly and cocompactly)
on a complete CAT(0) space. In \cite{bri} Bridson shows that for all the
surfaces $\Sigma_g$ mentioned above, the mapping class group $Mod(\Sigma_g)$
does not admit such an action. This result is first credited to \cite{kaplb}
but the proof in \cite{bri} consists of finding an obstruction to the
existence of such
an action by any one of these groups. This obstruction
involves taking an element of infinite order and its
centraliser in said group, then applying a condition on the abelianisation
of this centraliser. Here we show that this condition holds verbatim for
groups which are linear in positive characteristic, thus obtaining the
same obstruction.

We leave open the question of whether the mapping class group of the
closed orientable surface of genus 2 is linear in positive characteristic,
but we note that it was shown in \cite{bbud} and \cite{kork}
using the braid group results that this group is linear in characteristic
zero anyway.

\section{Proof}
The following is the crucial point which distinguishes our treatment of
linear groups in positive characteristic from the classical case.
\begin{prop} \label{diag}
If $\F$ is an algebraically closed field of positive characteristic
and $d\in\N$ then there exists $K\in \N$ such that for all elements
$g\in GL(d,\F)$ the matrix $g^K$ is diagonalisable.
\end{prop}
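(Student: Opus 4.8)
The plan is to pass to Jordan normal form and exploit the fact that over a field of characteristic $p$ a unipotent $d\times d$ matrix has order a power of $p$ which is bounded in terms of $d$ alone. Write $p$ for the characteristic of $\F$. Since $\F$ is algebraically closed, every $g\in GL(d,\F)$ is conjugate to a matrix in Jordan normal form, so it is enough to find a single $K$ (depending only on $p$ and $d$) such that the $K$-th power of every Jordan block of size at most $d$ is a scalar matrix; then $g^K$ will be conjugate to a diagonal matrix for every $g$.

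First I would fix $m\in\N$ with $p^m\geq d$ and put $K=p^m$. Let $J=\lambda I_k+N$ be a $k\times k$ Jordan block with $k\leq d$, where $\lambda\in\F$ is nonzero (as $g$ is invertible) and $N$ is the nilpotent shift, so that $N^k=0$ and $N$ commutes with $\lambda I_k$. The binomial theorem gives
\[
J^K=\sum_{j=0}^{K}\binom{K}{j}\lambda^{K-j}N^j=\sum_{j=0}^{k-1}\binom{K}{j}\lambda^{K-j}N^j,
\]
the terms with $j\geq k$ vanishing because $N^k=0$. The arithmetic input I need is that $\binom{p^m}{j}\equiv 0\pmod p$ for all $j$ with $0<j<p^m$ (immediate from $(1+t)^{p^m}=1+t^{p^m}$ in $\F_p[t]$, or from Lucas' theorem). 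Since $0<j\leq k-1<d\leq p^m=K$, every term with $j\geq 1$ dies in characteristic $p$, leaving $J^K=\lambda^K I_k$. Hence $g^K$ is conjugate to a diagonal matrix, i.e.\ diagonalisable, and this $K$ works simultaneously for all $g\in GL(d,\F)$.

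I do not expect a genuine obstacle; the only thing to watch is the uniformity of $K$, which is secured by choosing $p^m\geq d$ once and for all so that the binomial vanishing applies to every Jordan block that can occur in dimension $d$. An essentially equivalent route is to use the multiplicative Jordan decomposition $g=su=us$ with $s$ semisimple and $u$ unipotent: then $(u-I)^d=0$, so $(u-I)^{p^m}=0$ and $u^{p^m}=(I+(u-I))^{p^m}=I$ once $p^m\geq d$, whence $g^{p^m}=s^{p^m}$ is semisimple. I would most likely present the Jordan-block version above, since it keeps the argument elementary and self-contained.
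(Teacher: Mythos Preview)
Your proof is correct and essentially identical to the paper's: both take $K=p^m\geq d$, pass to Jordan normal form, and kill the off-diagonal part of $(\lambda I+N)^K$ using $N^K=0$ together with $\binom{p^m}{j}\equiv 0\pmod p$ for $0<j<p^m$. Your additional remark on the multiplicative Jordan decomposition is a valid alternative phrasing but not needed.
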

\begin{proof}
If $\F$ has characteristic $p$ then we take $K$ to be any power of $p$
which is at least $d$.

We put $g$ into Jordan normal form, or indeed any form where the matrix
splits up into blocks corresponding to the generalised eigenspaces of
$g$ and such that we are upper triangular in each block. Then on taking
the eigenvalue $\lambda\in\F$ of $g$, the block of $g$ corresponding
to $\lambda$ will be of the form $\lambda I+N$ where $N$ is upper
triangular with all zeros on the diagonal so that
\[(\lambda I+N)^K=\lambda^KI+\binom{K}{1}\lambda^{K-1}N+\ldots
+\binom{K}{K-1}\lambda N^{K-1}+N^K.\]
But $N^K=0$ because $K\geq d$ and $\binom{K}{i}\equiv 0$ modulo $p$
for $0<i<K$ as $K$ is a power of $p$. Thus in this block we have that
$g^K$ is equal to $\lambda^KI$. But we can do this in each block, making
$g^K$ a diagonal matrix.
\end{proof}

As for the mapping class group $Mod(\Sigma)$ of the surface $\Sigma$,
we have:
\begin{prop} \label{bri} (\cite{bri} Proposition 4.2)\\
If $\Sigma$ is an orientable surface of finite type having genus at
least 3 (with any number of boundary components and punctures) and if $T$
is the Dehn twist about any simple closed curve in $\Sigma$ then the
abelianisation of the centraliser in $Mod(\Sigma)$ of $T$ is finite.
\end{prop}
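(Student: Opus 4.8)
The plan is to make the centraliser $Z(T)$ of $T=T_c$ explicit and combine two facts: that cutting $\Sigma$ along $c$ exhibits $Z(T)$, up to finite index and a central copy of $\mathbb{Z}$, as a mapping class group of a simpler surface, and that $T$ itself dies in the abelianisation because of the lantern relation. First, since $gTg^{-1}=T_{g(c)}$ for every $g$, and since distinct isotopy classes of essential curves have distinct Dehn twists, an element of $\mathrm{Mod}(\Sigma)$ commutes with $T$ precisely when it fixes the isotopy class of $c$; thus $Z(T)=\mathrm{Stab}([c])$. This group is finitely generated, so $H_1(Z(T))$ is a finitely generated abelian group, and it is enough to show it admits no surjection onto $\mathbb{Z}$.

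Next, pass to the finite-index normal subgroup $Z^{+}\trianglelefteq Z(T)$ consisting of the mapping classes that preserve a chosen orientation of $c$; since $Z(T)/Z^{+}$ is finite, the five-term exact sequence reduces the problem to showing $H_1(Z^{+})$ is finite. Cutting $\Sigma$ along $c$ gives a homomorphism from $Z^{+}$ onto a finite-index subgroup of the mapping class group of the cut surface $\Sigma\,|\,c$, with kernel the central subgroup $\langle T\rangle\cong\mathbb{Z}$. Now by Powell's theorem the mapping class group of a surface of genus at least $3$ is perfect; in genus $1$ or $2$ it has finite abelianisation; and for a sphere with holes its abelianisation becomes finite once the classes of the twists about the boundary curves are quotiented out. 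So in every case the mapping class group of a component of $\Sigma\,|\,c$ has finite abelianisation modulo its boundary twists, and hence, via the cutting homomorphism, $H_1(Z^{+})$ is finite modulo the images of $T$ and of the twists about curves of $\Sigma$ parallel to its boundary.

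It remains to show these classes are torsion in $H_1(Z^{+})$; concretely, that $[T_b]=0$ in $H_1(Z^{+})$ both for $b=c$ and for any curve $b$ disjoint from $c$ and parallel to a boundary component of $\Sigma$. Embed a four-holed sphere $X\subset\Sigma$ with one boundary component equal to $b$ and disjoint from $c$ (which is automatic when $b=c$), chosen so that the six remaining curves in the lantern relation on $X$ — the other three boundary curves of $X$ together with the three interior lantern curves, all disjoint from $c$ — lie in a single orbit of $Z^{+}$ acting on isotopy classes of simple closed curves. The lantern relation $T_b\,T_{\partial_2}T_{\partial_3}T_{\partial_4}=T_{\sigma_{12}}T_{\sigma_{13}}T_{\sigma_{23}}$ then holds inside $Z^{+}$, and in $H_1(Z^{+})$ these six twists all represent one common class $s$, so it collapses to $[T_b]+3s=3s$, giving $[T_b]=0$. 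Feeding this back into the previous paragraph shows $H_1(Z^{+})$ is finite, hence $H_1(Z(T))$ is finite.

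The heart of the argument — and the only place genus at least $3$ is used — is the existence of the embedded four-holed sphere above: one must check, for each topological type of $c$ (separating or not, and with the components of $\Sigma\,|\,c$ possibly of small genus or planar), that $X$ can be positioned so that its six auxiliary curves are non-separating in the relevant component of $\Sigma\,|\,c$ and fall into a single $Z^{+}$-orbit, and moreover that the homeomorphisms realising those orbit relations can be chosen to fix $c$ (for instance by supporting them in $\Sigma\,|\,c$ away from the boundary coming from $c$). Genus at least $3$ ensures that the component carrying $X$ has genus at least $2$, which supplies exactly the room needed, mirroring the role of genus $\ge 3$ in Powell's theorem; the remaining low-complexity bookkeeping about the cut pieces is then routine once the boundary twists are known to vanish.
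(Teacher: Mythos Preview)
The paper does not give its own proof of this proposition: it is quoted verbatim from Bridson's paper (\cite{bri}, Proposition~4.2) and used as a black box. So there is nothing in the present paper to compare your argument against.

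That said, your sketch is along the standard lines by which this result is actually proved: identify $Z(T)$ with the stabiliser of $[c]$, pass to the orientation-preserving finite-index subgroup, use the cutting exact sequence $1\to\langle T\rangle\to Z^{+}\to\mathrm{Mod}(\Sigma\,|\,c)$ to reduce to the abelianisations of the mapping class groups of the pieces, and then kill the residual boundary-twist classes with a well-placed lantern relation. The reduction steps are correct as stated (your finite-index remark is fine: the image of $Z^{+}$ in $H_1(Z(T))$ is a quotient of $H_1(Z^{+})$ and has finite index). The one part you explicitly leave as ``routine bookkeeping'' --- arranging, for each topological type of $c$ and each relevant boundary curve $b$, a lantern whose six auxiliary curves lie in a single $Z^{+}$-orbit --- is genuinely the crux and does require a short case analysis (nonseparating $c$; separating $c$ with pieces of various genera), checking in each case that the pairs $(c,\text{curve})$ have the same topological type in $\Sigma$. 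This is where the hypothesis $g\ge 3$ is spent, exactly as you say, and it does go through; but it is not quite as automatic as your final sentence suggests, so in a full write-up you would want to exhibit the configurations explicitly rather than assert their existence.
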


This is in contrast to:
\begin{thm} \label{but}
Suppose that $G$ is a linear group over a field of positive
characteristic and $C_G(g)$ is the centraliser in $G$ of the infinite order
element $g$. Then the image of $g$ in the abelianisation of $C_G(g)$ also
has infinite order.
\end{thm}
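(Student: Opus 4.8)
The plan is to argue by contradiction: assume $g$ has infinite order but its image in $C_G(g)^{\mathrm{ab}}=C_G(g)/[C_G(g),C_G(g)]$ has finite order. By definition of the abelianisation map, this latter assumption says precisely that $g^n\in[C_G(g),C_G(g)]$ for some $n\in\N$ with $n\geq 1$. I want to deduce that $g$ has finite order, contradicting the hypothesis.

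First some harmless reductions. Since $G$ is linear over a field of positive characteristic it embeds in $GL(d,\F)$ for some $d\in\N$ and some field $\F$ of characteristic $p$, and enlarging $\F$ to its algebraic closure keeps $G$, $C_G(g)$ and the abelianisation unchanged (these are intrinsic to the abstract group $C_G(g)$). So we may assume $G\leq GL(d,\F)$ with $\F$ algebraically closed of characteristic $p$. Now apply Proposition \ref{diag} to obtain $K\in\N$ such that $h^K$ is diagonalisable for every $h\in GL(d,\F)$; then $g^K$ is diagonalisable, and hence so is $z:=g^{nK}=(g^K)^n$. As $[C_G(g),C_G(g)]$ is a subgroup containing $g^n$, it contains $z=(g^n)^K$; and as $z$ is a power of $g$ we have $C_G(g)\subseteq C_G(z)$, so $[C_G(g),C_G(g)]\subseteq[C_G(z),C_G(z)]$ and therefore $z\in[C_G(z),C_G(z)]$.

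The key step is now to exploit diagonalisability of $z$. Writing $\F^d=\bigoplus_\lambda V_\lambda$ as the direct sum of the eigenspaces of $z$, the centraliser of $z$ in $GL(d,\F)$ is the block subgroup $\prod_\lambda GL(V_\lambda)$, so $C_G(z)$ is contained in this product. Projecting a commutator of elements of $\prod_\lambda GL(V_\lambda)$ to any block $GL(V_\lambda)$ yields a commutator there, which has determinant $1$; hence every element of $[C_G(z),C_G(z)]$ has determinant $1$ in each block $GL(V_\lambda)$. Applying this to $z$, whose $V_\lambda$-block is the scalar matrix $\lambda I$ of determinant $\lambda^{\dim V_\lambda}$, forces $\lambda^{\dim V_\lambda}=1$ for every eigenvalue $\lambda$ of $z$. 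Since $\dim V_\lambda\leq d$, this gives $\lambda^{d!}=1$ for every eigenvalue $\lambda$, and as $z$ is diagonalisable we conclude $z^{d!}=I$, i.e. $g^{nKd!}=I$, contradicting the infinite order of $g$.

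The argument is short once Proposition \ref{diag} is in hand, and that proposition is exactly the point where positive characteristic is indispensable: it is what allows us to absorb the (possibly infinite-order in characteristic zero) unipotent part of $g$ by passing to the power $g^K$. The remaining steps are bookkeeping, and the only place warranting a little care is the interplay between $G$, the centralisers taken inside it, and the ambient group $GL(d,\F)$ used merely to host the diagonalisation and block decomposition — in particular one checks that replacing $C_G(g)$ by the larger $C_G(z)$ is legitimate and that the per-block determinant constraint passes to the subgroup $C_G(z)$ of $\prod_\lambda GL(V_\lambda)$.
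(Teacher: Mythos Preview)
Your proof is correct and follows essentially the same route as the paper's: pass to the algebraic closure, use Proposition~\ref{diag} to obtain a diagonalisable power, exploit the block structure of its centraliser in $GL(d,\F)$, and use the per-block determinants to force all eigenvalues to be roots of unity. The only cosmetic differences are that the paper phrases the argument directly (exhibiting a ``subdeterminant'' homomorphism $\det_i:C_G(g)\to\F^*$ on which $g$ has infinite image) rather than by contradiction, and works with $g^K$ inside $C_G(g)$ throughout instead of your detour through $z=g^{nK}$ and the larger centraliser $C_G(z)$.
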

\begin{proof}
As the abelianisation is the universal abelian quotient of a group, it is
enough to find some homomorphism of $C_G(g)$ to an abelian group
where $g$ maps to an element of infinite order, so we use the
determinant.

We first replace our field by its algebraic closure. Then
Proposition \ref{diag} tells us that we have the diagonalisable element
$g^K$, whereupon showing that $g^K$ has infinite order in the
abelianisation of $C_G(g)$ (which could of course be smaller than the
centraliser in $G$ of $g^K$) will establish the same for $g$.

Take a basis so that $g^K$ is actually diagonal and group together repeated
eigenvalues, so that we have
\[g^K=\sma{ccc}\boxed{\lambda_1I_{d_1}}& &0\\
&\ddots&\\
0& &\boxed{\lambda_kI_{d_k}}\fma.
\]
This means that any element in $C_G(g^K)$, and thus also in $C_G(g)$, is of the
form
\[\sma{ccc}\boxed{A_1}& &0\\
&\ddots&\\
0& &\boxed{A_k}\fma
\]
with the same block structure.

Consequently we have as homomorphisms from $C_G(g)$ to the multiplicative
abelian group $(\F^*,\times)$ not just the determinant itself but also
the ``subdeterminant'' functions
$\mbox{det}_1,\ldots ,\mbox{det}_k$. Here for $h\in C_G(g)$ we define
$\mbox{det}_i(h)$ as the determinant of the $i$th block of $h$ when
expressed with respect to our basis above which diagonalises $g^K$, and
this is indeed a homomorphism.

Now it could be that $\mbox{det}_i(g^K)$ has finite order, which implies
that $\lambda_i^{d_i}$ and thus also $\lambda_i$ has finite multiplicative
order in $\F^*$. However if this is true for all $i=1,2,\ldots ,k$ then
$\lambda _1,\ldots ,\lambda_k$ all have finite order. This means that
$g^K$ and so $g$ does too, which is a contradiction. Thus for some $i$ we know
$g^K$ and $g$ map to elements of infinite order in the 
abelian group $(\F^*,\times)$ under the homomorphism $\mbox{det}_i$.
\end{proof}

\begin{co} If $\Sigma$ is an orientable surface of finite type having genus at
least 3 (with any number of boundary components and punctures) then
$Mod(\Sigma)$ is not linear over any field of positive characteristic.
\end{co}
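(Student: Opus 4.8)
The plan is to derive the corollary immediately by playing Proposition~\ref{bri} off against Theorem~\ref{but}. Suppose, for contradiction, that $Mod(\Sigma)$ admits a faithful linear representation in some dimension over some field $\F$ of positive characteristic. Identifying $Mod(\Sigma)$ with its image under this representation, it becomes a linear group over $\F$ in exactly the sense required by Theorem~\ref{but}, so nothing further need be said about which $\F$ or which dimension we are in.

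Next I would fix a Dehn twist $T$ about an essential simple closed curve in $\Sigma$, that is, a curve which is neither null-homotopic nor homotopic into the boundary or into a puncture. Since $\Sigma$ has genus at least $3$ such curves certainly exist (for instance any non-separating curve), and it is a standard fact about mapping class groups that the Dehn twist about such a curve has infinite order in $Mod(\Sigma)$. Thus $T$ is an infinite-order element of our linear group, so Theorem~\ref{but} applies to it.

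Now apply Theorem~\ref{but} with $G=Mod(\Sigma)$ and $g=T$: the image of $T$ in the abelianisation of $C_{Mod(\Sigma)}(T)$ has infinite order, and in particular that abelianisation is an infinite group. On the other hand Proposition~\ref{bri} asserts precisely that the abelianisation of $C_{Mod(\Sigma)}(T)$ is finite. This contradiction shows that no such faithful representation can exist, which is the assertion of the corollary.

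As for obstacles: essentially all the content has already been supplied by the two cited results, so the only things to check directly are that some Dehn twist has infinite order and that the curve chosen is one to which Proposition~\ref{bri} applies, both of which are routine. The one point genuinely worth flagging is the compatibility of the two statements: Theorem~\ref{but} delivers infinite order in the abelianisation of the centraliser of $g$ itself --- a priori a much smaller group than the centraliser of a power of $g$, and hence the same group that Proposition~\ref{bri} controls --- so the two results dovetail exactly, with no loss incurred by the passage to $g^K$ inside the proof of Theorem~\ref{but}.
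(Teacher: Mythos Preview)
Your argument is correct and is exactly the approach the paper takes: combine Proposition~\ref{bri} with Theorem~\ref{but}, using that a Dehn twist about an essential simple closed curve has infinite order, to obtain a contradiction. The paper's own proof is a one-line version of what you have written out in detail.
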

\begin{proof} We can combine Proposition \ref{bri} and Theorem \ref{but}
to get a contradiction because Dehn twists have infinite order.
\end{proof}

\Address

\end{document}